\newlength{\defbaselineskip}
\newcommand{\setlinespacing}[1]%
           {\setlength{\baselineskip}{#1 \defbaselineskip}}
\numberwithin{equation}{section}
\newtheorem{thm}{Theorem}[section]
\newtheorem{cor}[thm]{Corollary}
\newtheorem{lem}[thm]{Lemma}
\newtheorem{prop}[thm]{Proposition}
\theoremstyle{definition}
\theoremstyle{remark}
\newtheorem{rem}[thm]{Remark}
\numberwithin{equation}{section}
\begin{document}

\title[Integrability of the wave propagator]
{On the integrability of the wave propagator arising from the Liouville-von Neumann equation}

\author{Youngwoo Koh, Yoonjung Lee and Ihyeok Seo}

\thanks{Y. Koh was supported by NRF-2019R1F1A1054310.
I. Seo was supported by NRF-2019R1F1A1061316.}

\subjclass[2010]{Primary: 35B45; Secondary: 35Q40}
\keywords{Strichartz estimates, von Neumann-Landau equation}

\address{Department of Mathematics Education, Kongju National University, Kongju 32588, Republic of Korea}
\email{ywkoh@kongju.ac.kr}

\address{Department of Mathematics, Sungkyunkwan University, Suwon 16419, Republic of Korea}
\email{yjglee@skku.edu}

\address{Department of Mathematics, Sungkyunkwan University, Suwon 16419, Republic of Korea}
\email{ihseo@skku.edu}

\begin{abstract}
The Liouville-von Neumann equation describes the change in the density matrix with time.
Interestingly, this equation was recently regarded as a wave equation for wave functions but not a equation for density functions.
This setting leads to an extended form of the Schr\"odinger wave equation governing the motion of a quantum particle.
In this paper we obtain the integrability of the wave propagator arising from the Liouville-von Neumann equation in this setting.
\end{abstract}

\maketitle

\section{Introduction}
The pure state of a quantum system is described by a wave function obeying the Schr\"odinger equation $i\partial_t\psi+\Delta\psi=0$.
For the description of mixed states, the notion of density matrix was introduced by von Neumann \cite{N} (see also \cite{L}).
This density matrix for a pure state is equal to the product of the wave function and its complex conjugate at different arguments.
Motivated by this, one can consider
\begin{align*}
i\frac{\partial}{\partial t}(\psi(x,t)\overline{\psi}(y,t))&=i\frac{\partial\psi(x,t)}{\partial t}\overline{\psi}(y,t)
+i\frac{\partial\overline{\psi}(y,t)}{\partial t}\psi(x,t)\\
&=-\Delta_x\psi(x,t)\overline{\psi}(y,t)+\Delta_y\overline{\psi}(y,t)\psi(x,t)\\
&=(-\Delta_x+\Delta_y)(\psi(x,t)\overline{\psi}(y,t)),
\end{align*}
which leads to the following equation
\begin{equation}\label{nle0}
i\partial_t\Psi(x,y,t)+(\Delta_x-\Delta_y)\Psi(x,y,t)=0
\end{equation}
where $(x,y,t)\in \mathbb{R}^n\times \mathbb{R}^n\times\mathbb{R}$.

This equation is commonly called the Liouville-von Neumann equation in physical literature,
giving the change in the density matrix with time (\cite{N2,L2}).
However, it is more interesting to regard \eqref{nle0} as a wave equation for wave functions but not a equation for density functions.
This is the key point in a recent work by Chen \cite{C}.
Contrary to Schr\"odinger's wave functions,
the wave functions $\Psi(x,y,t)$ of the Liouville-von Neumann equation \eqref{nle0} for a single particle are \textit{bipartite}.
These \textit{bipartite} wave functions satisfy all the basic properties of Schr\"odinger's wave functions which correspond to those
\textit{bipartite} wave functions of product forms.
Indeed, the Schr\"odinger equation is a special case of the equation \eqref{nle0} with the initial data of product form,
$\Psi(x,y,0)=\psi(x)\overline{\psi}(y)$,
because in this case $\Psi(x,y,t)=\psi(x,t)\overline{\psi}(y,t)$ with $\psi(x,t)$ satisfying the Schr\"odinger equation
with the initial data $\psi(x,0)=\psi(x)$ and vice versa.
This extension of Schr\"odinger's form establishes a mathematical expression of wave-particle duality and that von Neumann's entropy
is a quantitative measure of complementarity between wave-like and particle-like behaviors.
Furthermore, it suggests that collapses of Schr\"odinger's wave functions are just the simultaneous transition of the
particle from many levels to one. See \cite{C} for details.
The equation considered as a wave function equation is also explicitly used in \cite{KM}, connecting with Bose-Einstein condensation.

The problem we want to discuss in this paper is integrability of wave propagator $e^{it(\Delta_x -\Delta_y)}$
which gives a formula for the solution to the Liouville-von Neumann equation.
Applying the Fourier transform to \eqref{nle0}, the solution $\Psi(x,y,t)$ is indeed given by
$$e^{it(\Delta_x -\Delta_y)}f(x, y):= \frac{1}{(2\pi)^{2n}} \int_{\mathbb{R}^n} \int_{\mathbb{R}^n} e^{ix\cdot\xi + iy\cdot\xi' -it( |\xi|^2-|\xi'|^2)} \hat{f}(\xi,\xi')  d\xi d\xi',$$
where $f$ is the initial data $\Psi(x,y,0)$ and  $\hat{f}$ is the Fourier transform thereof.
Our result is stated as follows.

\begin{thm}\label{thm}
Let $n\ge 1$. Assume that $2 \leq q \leq \infty$, $2 \leq r_2 \leq r_1 \leq \infty$ and
\begin{equation}\label{admi}
\frac{2}{q}=n\Big(1-\frac{1}{r_1}-\frac{1}{r_2} \Big).
\end{equation}
Then we have
\begin{equation}\label{T}
\|e^{it(\Delta_x -\Delta_y)}f\|_{L_t^q  L_x^{r_1} L_y^{r_2}} \lesssim \|f\|_{L^2},
\end{equation}
except for $(q, r_1, r_2)=(2, \infty, \infty)$ when $n=1$ and for $(q, r_1, r_2)=(2, \infty, 2)$ when $n=2$.
\end{thm}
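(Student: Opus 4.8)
The plan is to run the $TT^*$ method, the essential new ingredient being a \emph{mixed-norm} dispersive estimate. Write $U(t)=e^{it(\Delta_x-\Delta_y)}$ and observe that $U(t)$ factors as a tensor product, $U(t)=e^{it\Delta_x}\otimes e^{-it\Delta_y}$, the two factors acting separately on the $x$- and $y$-variables. The first step is to prove that, for $2\le r_2\le r_1\le\infty$ and $t\neq0$,
\begin{equation}\label{disp}
\|U(t)g\|_{L^{r_1}_x L^{r_2}_y}\ \lesssim\ |t|^{-n(1-\frac{1}{r_1}-\frac{1}{r_2})}\,\|g\|_{L^{r_1'}_x L^{r_2'}_y},
\end{equation}
which by \eqref{admi} is decay of order $|t|^{-2/q}$. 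To prove \eqref{disp} one applies the single-block bound $\|e^{\pm it\Delta}h\|_{L^{p}(\mathbb{R}^n)}\lesssim|t|^{-n(\frac12-\frac1p)}\|h\|_{L^{p'}(\mathbb{R}^n)}$ (obtained by interpolating $L^2$ conservation against the $|t|^{-n/2}$ kernel bound $L^1\to L^\infty$) in the two blocks successively, interchanging the order of the iterated norms along the way by Minkowski's integral inequality; the hypothesis $r_2\le r_1$, together with the automatic relation $r_1'\le2\le r_2$, is exactly what legitimizes these interchanges, and this is the only place the ordering of $r_1,r_2$ is used. (If instead $r_1\le r_2$, one reduces to this case via the substitution $(x,y,t)\mapsto(y,x,-t)$, under which $U(t)$ is invariant, so $r_2\le r_1$ is not a genuine restriction.)

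For $2<q<\infty$ the theorem then follows by a routine fractional-integration argument: by $TT^*$ and duality it suffices to bound $TT^*F(\cdot,\cdot,t)=\int_{\mathbb{R}}U(t-s)F(\cdot,\cdot,s)\,ds$ from $L^{q'}_tL^{r_1'}_xL^{r_2'}_y$ to $L^{q}_tL^{r_1}_xL^{r_2}_y$, and inserting \eqref{disp} under the $s$-integral and then applying the one-dimensional Hardy--Littlewood--Sobolev inequality in $t$ (valid precisely because $0<2/q<1$) closes the estimate. The case $q=\infty$ is degenerate: \eqref{admi} forces $r_1=r_2=2$, and \eqref{T} is then just the unitarity of $U(t)$ on $L^2(\mathbb{R}^{2n})$.

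The endpoint case $q=2$ is the heart of the matter. Here $2/q=1$, the decay $|t-s|^{-1}$ in \eqref{disp} is not integrable, and fractional integration is unavailable; one must instead run the bilinear argument of Keel and Tao, adapted to mixed norms. To this end I would work along the interpolation scale $B_\theta=L^{a_1(\theta)}_xL^{a_2(\theta)}_y$ with $\frac1{a_i(\theta)}=\frac12+(2\theta-1)(\frac1{r_i}-\frac12)$, so that $B_{1/2}=L^2(\mathbb{R}^{2n})$, $B_1=L^{r_1}_xL^{r_2}_y$ and $B_{1-\theta}=B_\theta'$; along this scale \eqref{disp} reads $\|U(t)U(s)^*\|_{B_\theta'\to B_\theta}\lesssim|t-s|^{-(2\theta-1)}$. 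One decomposes the bilinear form $\langle TT^*F,G\rangle$ dyadically in $|t-s|$, estimates each piece by interpolating the energy estimate against these dispersive estimates, and reassembles the pieces using the exotic real-interpolation (off-diagonal) lemma of Keel and Tao. Its hypotheses are met because the mixed-norm Lebesgue spaces $L^{a_1}_xL^{a_2}_y$ form a complex interpolation scale with the expected duality, and because the dispersion rate along the scale is the affine function $\theta\mapsto2\theta-1$; nothing in this combinatorial step cares whether the spatial norm is mixed.

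The one point at which finiteness of the exponents enters is the need, in the off-diagonal lemma, to enlarge the exponents slightly past $B_1$ — that is, to take $\theta$ slightly bigger than $1$ — which requires $\frac1{r_1},\frac1{r_2}>0$, i.e.\ $r_1<\infty$ (recall $r_2\le r_1$). By \eqref{admi}, when $q=2$ one has $r_1=\infty$ only for $(q,r_1,r_2)=(2,\infty,\infty)$ with $n=1$ and $(2,\infty,2)$ with $n=2$; for $n\ge3$ the constraint $r_2\ge2$ forces $r_1<\infty$ there. These two triples are exactly those excluded in the statement, and for them the estimate genuinely fails, in the same way as the classical forbidden $L^2_tL^\infty$ endpoint for the two-dimensional Schr\"odinger propagator (visible already on product data through the $x$-factor when $n=2$). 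Thus the main obstacle of the proof is the verification that the Keel--Tao endpoint machinery survives the passage to the mixed-norm space $L^{r_1}_xL^{r_2}_y$, which it does once the mixed-norm dispersive estimate \eqref{disp} is in hand.
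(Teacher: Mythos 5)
Your treatment of $q=\infty$ and of the non-endpoint range $2<q<\infty$ coincides with the paper's: your mixed-norm dispersive estimate is exactly \eqref{decay} of Lemma \ref{lem} (the paper proves it by Riesz--Thorin between the three corner cases $(\infty,\infty)$, $(\infty,2)$, $(2,2)$ rather than by two successive single-block bounds, but your direct derivation is fine --- the two Minkowski interchanges are justified by $r_2'\le 2\le r_1$ and by $r_1'\le r_2'$, which is where $r_2\le r_1$ enters, just as you say), and the Hardy--Littlewood--Sobolev step in $t$ is identical.

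The endpoint $q=2$ is where your argument has a genuine gap. You propose to run the Keel--Tao machinery along a scale $B_\theta=L_x^{a_1(\theta)}L_y^{a_2(\theta)}$ in which \emph{both} exponents move, and you assert that the hypotheses of the off-diagonal bilinear interpolation lemma ``are met because the mixed-norm Lebesgue spaces form a complex interpolation scale.'' But that lemma (Lemma \ref{biinter}) is a \emph{real}-interpolation statement: after the dyadic decomposition in $|t-s|$ one holds $\ell^\infty$ bounds with two different weights and must then identify the real interpolation space of the domain couple with $L_t^2L_x^{r_1'}L_y^{r_2'}$, with a second index small enough that the target comes out as $\ell^1_0$. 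For couples of mixed-norm spaces in which both inner exponents vary, the Lions--Peetre identity $(L^{p_0}(A_0),L^{p_1}(A_1))_{\theta,p}=L^p((A_0,A_1)_{\theta,p})$ of Lemma \ref{ide} is not available in the needed form: it requires $1/p=(1-\theta)/p_0+\theta/p_1$ and, even then, produces a Lorentz-type space in the inner variable unless the inner couple is constant; complex interpolation only gives an embedding into the $(\cdot\,,\cdot)_{\theta,\infty}$ space, which is too weak. This is precisely the obstruction the paper's proof is designed to avoid: in Proposition \ref{lem2} the $y$-exponent is \emph{frozen} at $r_2$ and only the $x$-exponent is perturbed to $a,\tilde a$ near $r_1$, so the relevant couples are $(L_x^{a_0'}(A),L_x^{a_1'}(A))$ with the fixed Banach space $A=L_y^{r_2'}$, for which Lemma \ref{ide} applies verbatim. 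To close your argument you would need either to prove the localized bilinear estimates \eqref{lemeq2} with the $y$-exponent fixed (as the paper does, using the non-endpoint estimates perturbed only in $x$), or to supply a substitute real-interpolation identity for genuinely two-parameter mixed-norm couples; as written, the claim that ``nothing in this combinatorial step cares whether the spatial norm is mixed'' is exactly the false step. A minor further point: the paper does not assert that the two excluded triples genuinely fail, and your product-data reduction exhibits failure only for $(2,\infty,2)$ with $n=2$; for $(2,\infty,\infty)$ with $n=1$ the same reduction gives the admissible one-dimensional pair $(4,\infty)$, so no counterexample is visible that way.
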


Compared with \eqref{T}, the space-time integrability known as \textit{Strichartz estimates} for the Schr\"odinger case has been extensively studied over the last several decades and is now completely understood as follows (see \cite{St,GV,M-S,KT}):
\begin{equation*}
\|e^{it\Delta} f\|_{L_t^q L_x^r} \lesssim \|f\|_{L^2}
\end{equation*}
if and only if $(q,r)$ is $n$-Schr\"odinger-admissible, i.e.,
$q,r\geq 2$, $(q,r,n)\neq(2,\infty,2)$ and $2/q + n/r = n/2$.
Particularly when $r_1=r_2$, the exponent pair $(q,r_1,r_2)$ in the theorem becomes $2n$-Schr\"odinger-admissible.
In this case, \eqref{T} can be found in \cite{LL}
and the range of $(r_1,r_2)$ is given by the closed segment $[D,B]$ in Figure \ref{fig1} below.
But, it is significant in the \textit{bipartite} form to quantify the spatial integrability differently with respect to $x$ and $y$.
In this regard, the main contribution of the theorem is to extend the diagonal case $r_1=r_2$ to mixed norms $L_x^{r_1} L_y^{r_2}$, $r_1\neq r_2$,
where the range of $(r_1,r_2)$ is given by the closed triangle with vertices $A,D,B$.

Notice that the condition \eqref{admi} is necessary for \eqref{T} to be invariant
under the scaling $(x, y,t) \rightarrow (\lambda x, \lambda y,\lambda^2t)$, $\lambda >0$.
By the standard $TT^*$ method, \eqref{T} is also equivalent to the boundedness of the time translation invariant operator
$TT^\ast: F \rightarrow \int_{\mathbb{R}} e^{i(t-s)(\Delta_x-\Delta_y)}F(s) ds $
from $L_t^q L_x^{r_1} L_y^{r_2}$ to $L_t^{q'} L_x^{r_1'} L_y^{r_2'}$.
Hence, $q \ge q'$ (i.e., $q \ge 2$) is required (see \cite{H} or \cite{G}).
When $q=2$ and $q=\infty$, $(1/r_1,1/r_2)$ lies on the line through the points $A,D$
and on the point $B$, respectively.

\begin{figure}
	\centering
	\includegraphics[width=0.5\textwidth]{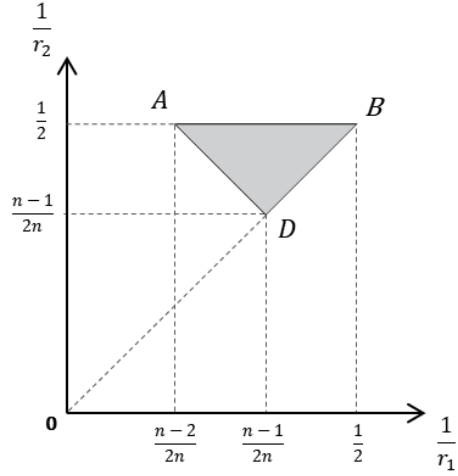}
	\label{fig1}
   \caption{The range of $(r_1,r_2)$ for which \eqref{T} holds when $n\geq3$.}
\end{figure}

In the final section we also give some applications of Theorem \ref{thm} to nonlinear problems.

\

Throughout this paper, we use $\mathcal{F}f$ and $\hat{f}$ to denote the Fourier transform of $f$
and $\langle f, g \rangle_{x,y}$ denotes the usual inner product on $L_{x,y}^2$.
We also denote A $\lesssim$ B to mean $A \leq CB$ with unspecified constant $C>0$ which may be different at each occurrence.

\section{The non-endpoint case $q>2$}\label{sec3}
In this section we prove Theorem \ref{thm} when $q>2$.
The endpoint case $q=2$ will be proved in the next section.

When $q=\infty$, the estimate \eqref{T} follows directly from the Plancherel theorem as follows:
\begin{align*}
\| e^{it(\Delta_x-\Delta_y)}f \|_{L_t^{\infty} L_{x,y}^{2}}
= \sup_{t \in \mathbb{R}} \| e^{-it(|\xi|^2-|\xi'|^2)}  \hat{f} \|_{L_{\xi, \xi'}^2} = \| \hat{f} \|_{L_{\xi, \xi'}^2} =\| f \|_{L_{x,y}^2}.
\end{align*}
Now we only need to consider $2 < q < \infty$.
By the standard $TT^{\ast}$ argument, we may prove the following estimate
\begin{equation}\label{TT*}
\left\| \int_{\mathbb{R}} e^{i(t-s)(\Delta_x-\Delta_y)} F(s) ds \right\|_{L_t^q L_x^{r_1} L_y^{r_2}} \lesssim \left\| F \right\|_{L_t^{q'} L_x^{r_1'} L_y^{r_2'}}
\end{equation}
which is equivalent to \eqref{T}.
To show this, we obtain the following fixed-time estimates for the propagator $e^{it(\Delta_x -\Delta_y)}$.

\begin{lem}\label{lem}
Let $n \ge 1$ and $2 \leq r_2 \leq r_1 \leq \infty$. Then we have
\begin{equation}\label{decay}
\|e^{it(\Delta_x -\Delta_y)}G\|_{L_x^{r_1} L_y^{r_2}} \lesssim |t|^{-n\big(1-\frac{1}{r_1}-\frac{1}{r_2}\big)} \| G\|_{L_x^{r_1'} L_y^{r_2'}}.
\end{equation}
\end{lem}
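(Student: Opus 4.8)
The plan is to factorize the propagator and reduce \eqref{decay} to three elementary endpoint bounds, which are then combined by interpolation in mixed‑norm Lebesgue spaces. Since $\Delta_x$ and $-\Delta_y$ act on disjoint variables and commute, we have $e^{it(\Delta_x-\Delta_y)}=e^{it\Delta_x}\circ e^{-it\Delta_y}$, with integral kernel $K_t(x-x')\,\overline{K_t(y-y')}$, where $K_t(z)=(4\pi i t)^{-n/2}e^{i|z|^2/(4t)}$ satisfies the pointwise bound $|K_t(z)|\lesssim|t|^{-n/2}$. From this I would record: (i) the $L^2$ conservation $\|e^{it(\Delta_x-\Delta_y)}G\|_{L_{x,y}^2}=\|G\|_{L_{x,y}^2}$ by Plancherel (as already used for the $q=\infty$ estimate above); (ii) the dispersive bound $\|e^{it(\Delta_x-\Delta_y)}G\|_{L_{x,y}^\infty}\lesssim|t|^{-n}\|G\|_{L_{x,y}^1}$, directly from the pointwise kernel bound; and (iii) the mixed bound $\|e^{it(\Delta_x-\Delta_y)}G\|_{L_x^\infty L_y^2}\lesssim|t|^{-n/2}\|G\|_{L_x^1 L_y^2}$. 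For (iii) I would write $e^{it(\Delta_x-\Delta_y)}G(x,\cdot)=\int K_t(x-x')\,\big(e^{-it\Delta_y}G(x',\cdot)\big)\,dx'$, apply Minkowski's inequality in $L_y^2$, use that $e^{-it\Delta_y}$ is an isometry on $L_y^2$, then bound $|K_t(x-x')|\lesssim|t|^{-n/2}$ and take the supremum over $x$.

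The next step is interpolation. By the Riesz--Thorin theorem in mixed‑norm Lebesgue spaces (due to Benedek and Panzone), interpolating (ii) and (iii) yields $\|e^{it(\Delta_x-\Delta_y)}G\|_{L_x^\infty L_y^{r_2}}\lesssim|t|^{-n(1-1/r_2)}\|G\|_{L_x^1 L_y^{r_2'}}$ for every $2\le r_2\le\infty$, and interpolating this family against (i), with interpolation parameter $\theta=2/r_1$, gives \eqref{decay} for the full range $2\le r_2\le r_1\le\infty$. Two bookkeeping points make this go through cleanly: the hypotheses $r_1\ge r_2$ and $r_2\ge2$ are exactly what force both interpolation parameters into $[0,1]$ and make the target exponents attainable, the reachable region being the triangle with vertices $(r_1,r_2)=(\infty,\infty),(2,2),(\infty,2)$ corresponding to (ii), (i), (iii); and since the exponent $(1/r_1,1/r_2)\mapsto n(1-1/r_1-1/r_2)$ is affine and equals the decay rates $n,0,n/2$ at these three vertices, the multiplicativity $C_0^{1-\theta}C_1^\theta$ of constants under Riesz--Thorin reproduces precisely the power $|t|^{-n(1-1/r_1-1/r_2)}$.

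I do not expect a serious obstacle here; the only place that needs genuine care is estimate (iii), where one must not lose the $L_y^2$ structure — the key being to exploit the unitarity of $e^{-it\Delta_y}$ on $L_y^2$ together with Minkowski's inequality \emph{before} invoking the crude pointwise bound on $K_t$ in the $x'$‑integration (a naive absolute‑value estimate in $x$ would destroy the oscillation and fail). The remaining work is the routine verification that the constraints $2\le r_2\le r_1\le\infty$ in the statement line up exactly with the admissible interpolation parameters. Note that \eqref{decay} holds for this entire range without exceptions; the exceptional triples appearing in Theorem \ref{thm} enter only later, through the endpoint case $q=2$, not through the fixed‑time estimate.
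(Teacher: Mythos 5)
Your proposal is correct and follows essentially the same route as the paper: the same factorization $e^{it\Delta_x}\circ e^{-it\Delta_y}$ with kernel $K_t(x-x')\overline{K_t(y-y')}$, the same three endpoint bounds at $(r_1,r_2)=(\infty,\infty)$, $(\infty,2)$, $(2,2)$ (using the pointwise kernel bound, the $L^2$ unitarity of $e^{-it\Delta_y}$ together with Minkowski, and Plancherel, respectively), and the same conclusion by Riesz--Thorin interpolation in mixed-norm spaces. The only differences are cosmetic (you verify the interpolation bookkeeping in two stages rather than citing the three vertices directly), so nothing further is needed.
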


Assuming for the moment this lemma, we see that
\begin{align*}
\bigg\| \int_{\mathbb{R}}e^{i(t-s)(\Delta_x -\Delta_y)} F(s) ds \bigg\|_{L_t^q L_x^{r_1} L_y^{r_2}}
& \leq\bigg\| \int_{\mathbb{R}}\| e^{i(t-s)(\Delta_x -\Delta_y)} F(s)\|_{L_x^{r_1} L_y^{r_2}} ds \bigg\|_{L_t^q} \cr
&\lesssim \bigg\|  \int_{\mathbb{R}} |t-s|^{-n\big(1-\frac{1}{r_1}-\frac{1}{r_2}\big)} \| F(s)\|_{L_x^{r_1'} L_y^{r_2'}} ds\bigg\|_{L_t^q}.
\end{align*}
Here we use the Hardy-Littlewood-Sobolev inequality (\cite{S}, Section V.1.2) for dimension one,
\begin{align}\label{hls}
\big\| |t|^{-\alpha} \ast g\big\|_{L^q} \lesssim \|g\|_{L^{p}},
\end{align}
where $0 < \alpha <1$, $1 \leq p < q < \infty $ and $ 1/q+1=1/p+\alpha$.
By applying \eqref{hls} with $p=q'$ and $\alpha=n(1-1/r_1-1/r_2)$ to the above,
we obtain the estimate \eqref{TT*}
if $2 <q <\infty$, $2\leq r_1 \leq r_2 \leq \infty$ and $2/q=n(1-1/r_1-1/r_2)$, as desired.

\begin{proof}[Proof of Lemma \ref{lem}]
It remains to prove the lemma. By the Riesz-Thorin interpolation theorem, it suffices to show \eqref{decay} for the following three cases:
\begin{itemize}
\item[(a)] $r_1=r_2=\infty$,
\item[(b)] $r_1=\infty$ and $r_2=2$,
\item[(c)] $r_1=r_2=2$.
\end{itemize}
First we write
\begin{align*}
e^{it(\Delta_x -\Delta_y)} G(x,y)
&= \int_{\mathbb{R}^n} \int_{\mathbb{R}^n} K_{t} (x-x') K_{-t} (y-y') G(x', y') dx' dy' \cr
&= K_{-t} \ast_y ( K_{t} \ast_x G )
\end{align*}
where
$$K_{t}(x)=\frac{1}{(4\pi it)^{n/2}} e^{\frac{i|x|^2}{4t}}$$
denotes the integral kernel for the Schr\"odinger propagator.
Then one can see that
\begin{equation}\label{kernel_esti}
\|K_{t}\ast g \|_{L^\infty}
\lesssim |t|^{-\frac{n}{2}}\|g\|_{L^1}
\end{equation}
by Young's inequality and that
\begin{equation}\label{kernel_esti2}
\|K_{t}\ast g \|_{L^2}=\|e^{it\Delta} g \|_{L^2}=\|g\|_{L^2}
\end{equation}
by the Plancherel theorem.

Applying \eqref{kernel_esti} repeatedly together with the Minkowski inequality, we obtain the first case $(a)$ as follows:
\begin{align*}
\|e^{it(\Delta_x -\Delta_y)} G \|_{L_{x,y} ^{\infty}}
&=\big\| \| K_{-t} \ast_y ( K_{t} \ast_x G ) \|_{L_y ^\infty} \big\|_{L_x^\infty}\cr
&\lesssim |t|^{-\frac{n}{2}} \big\| \|  K_{t} \ast_x G  \|_{L_y^1} \big\|_{L_x ^\infty}\cr
&\lesssim |t|^{-\frac{n}{2}} \big\| \|  K_{t} \ast_x G  \|_{L_x^\infty} \big\|_{L_y ^1}\cr
&\lesssim |t|^{-n} \|  G  \|_{L_{x,y}^1}.
\end{align*}
For the second case $(b)$, we use \eqref{kernel_esti2} and \eqref{kernel_esti}
along with the Minkowski inequality to get
\begin{align*}
\|e^{it(\Delta_x -\Delta_y)} G \|_{L_x^{\infty} L_y^{2}}
&= \big\| \|  K_{-t} {\ast}_y  (K_{t} \ast_x G )  \|_{L_y^2} \big\|_{L_x ^\infty}\cr
&\lesssim \big\| \|   K_{t} {\ast}_x G  \|_{L_y^2} \big\|_{L_x ^\infty}\cr
&\lesssim \big\| \| K_{t} {\ast}_x G \|_{L_x^\infty} \big\|_{L_y ^2}\cr
&\lesssim |t|^{-\frac{n}{2}} \big\| \| G\|_{L_x^1} \big\|_{L_y^2}\cr
&\lesssim |t|^{-\frac{n}{2}} \|  G  \|_{L_x^1 L_y^2}.
\end{align*}
Lastly the case $(c)$ follows directly from the Plancherel theorem as
\begin{align*}
\|e^{it(\Delta_x -\Delta_y)} G \|_{L_{x,y} ^{2}}
= \big\| e^{-it(|\xi|^2-|\xi'|^2) }   \widehat{G} \big\|_{L_{\xi,\xi'} ^2}
=\| G \| _{L_{x,y}^2}.
\end{align*}
\end{proof}

\section{The endpoint case $q=2$}
It remains to prove \eqref{T} when $q=2$.
Following \cite{KT}, we will obtain the estimate by a bilinear interpolation between the nonendpoint results and the decay estimates
for a time-localized bilinear form operator.
In this argument we can take advantage of the symmetry and the flexibility of the bilinear form setting.

By the standard $TT^*$ method we may prove
$$\bigg\|\int_{\mathbb{R}} e^{i(t-s)(\Delta_x-\Delta_y)}F(s)ds\bigg\|_{L_t^qL_x^{r_1}L_y^{r_2}}\lesssim\|F\|_{L_t^{q'}L_x^{r_1'}L_y^{r_2'}},$$
and by duality this is in turn equivalent to the bilinear form estimate
\begin{equation*}
\int_{\mathbb{R}} \int_{\mathbb{R}} \left\langle e^{-is(\Delta_x -\Delta_y)} F(s),\, e^{-it(\Delta_x -\Delta_y)} G(t) \right\rangle_{x,y} ds dt \lesssim \|F\|_{L_t^{2} L_x^{r_1'} L_y^{r_2'}} \|G\|_{L_t^{2} L_x^{r_1'} L_y^{r_2'}}.
\end{equation*}
By symmetry it suffices to restrict our attention to the retarded region
$$\Omega=\{ (s, t)\in \mathbb{R}^2 :s<t \}$$
in the above double integral.
Now we break $\Omega$ into a series of time-localized regions using a Whitney type decomposition (see \cite{S} or \cite{F});
let $\mathcal{Q}_j$ be the family of dyadic squares in $\Omega$ whose side length is dyadic number $2^j$ for $j\in \mathbb{Z}$.
Each square $Q =I\times J \in \mathcal{Q}_j$ has the property that
\begin{equation}\label{whit}
 2^j\sim |I| \sim |J| \sim \textnormal{dist}(I, J)
\end{equation}
and $\Omega = \cup_{j\in \mathbb{Z}} \cup_{Q \in \mathcal{Q}_j}Q$ where the squares $Q$ are essentially disjoint.

Hence we are reduced to showing the following estimate
\begin{align}\label{local2}
\sum_{j\in \mathbb{Z}}\sum_{Q \in \mathcal{Q}_j}  |T_{j}(F, G)| \lesssim \|F\|_{L_t^{2} L_x^{r_1'} L_y^{r_2'}} \|G\|_{L_t^{2} L_x^{r_1'} L_y^{r_2'}}
\end{align}
where
\begin{equation}\label{tj}
T_{j}(F, G) :=   \int_J \int_I  \left\langle e^{-is(\Delta_x -\Delta_y)} F(s),\ e^{-it(\Delta_x -\Delta_y)} G(t) \right\rangle_{x,y} ds dt.
\end{equation}

To get this estimate, we make use of the following two-parameter family of estimates in which the case $r_1=r_2$ is excluded
but this is harmless because the estimate \eqref{T} is already known for this case (\cite{LL}).

\begin{prop}\label{lem2}
Let $n\ge1$. Assume that $2 \leq r_2 < r_1 \leq \infty$ and
\begin{equation*}
 1=n(1-\frac{1}{r_1}-\frac{1}{r_2}).
\end{equation*}
Then we have
\begin{equation}\label{lemeq2}
 \sum_{Q \in \mathcal{Q}_j} \left| T_{j}(F, G) \right| \lesssim 2^{-j\beta( a, \tilde{a})} \|F \|_{L_t^{2} L_x^{a'} L_y^{r_2'}} \|G\|_{L_t^{2}L_x^{\tilde{a}'} L_y^{r_2'}}
\end{equation}
for all $j \in \mathbb{Z}$ and all $(1/a, 1/\tilde{a})$ in a neighborhood of $(1/r_1, 1/r_1)$ (see Figure 2)
with $$\beta(a, \tilde{a})=-1+\frac{n}{2}(2-\frac{1}{a}-\frac{1}{\tilde{a}}-\frac{2}{r_2}).$$
\end{prop}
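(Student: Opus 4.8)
The plan is to follow the bilinear interpolation scheme of \cite{KT}: reduce the Whitney‑localized sum in \eqref{lemeq2} to an estimate for a single dyadic square, prove that estimate on the diagonal $a=\tilde a$ using the dispersive bound \eqref{decay}, and then move it off the diagonal by interpolating in the two spatial exponents.

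First I would show, for each $j$ and each $Q=I\times J\in\mathcal Q_j$, the per‑square bound
\begin{equation}\label{persquare}
|T_j(F,G)|\lesssim 2^{-j\beta(a,\tilde a)}\,\|F\|_{L^2(I;L_x^{a'}L_y^{r_2'})}\,\|G\|_{L^2(J;L_x^{\tilde a'}L_y^{r_2'})},
\end{equation}
where $T_j$ is as in \eqref{tj}. Granting \eqref{persquare}, the estimate \eqref{lemeq2} is immediate: by the Whitney property \eqref{whit} each dyadic interval of length $2^j$ occurs as the first coordinate of only $O(1)$ squares of $\mathcal Q_j$, and likewise for the second coordinate, so applying the Cauchy--Schwarz inequality first over the squares $Q$ and then over the (pairwise disjoint) families of first and of second coordinates gives $\sum_{Q\in\mathcal Q_j}|T_j(F,G)|\lesssim 2^{-j\beta(a,\tilde a)}\|F\|_{L_t^2L_x^{a'}L_y^{r_2'}}\|G\|_{L_t^2L_x^{\tilde a'}L_y^{r_2'}}$.

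For \eqref{persquare} on the diagonal $a=\tilde a=\rho$, with $\rho$ ranging over a fixed closed interval contained in $(r_2,\infty]$ and having $r_1$ in its interior (so that $2\le r_2\le\rho$, as Lemma \ref{lem} requires), I would use that $e^{it(\Delta_x-\Delta_y)}$ is unitary on $L^2_{x,y}$ together with the group law to rewrite the integrand of \eqref{tj} as $\langle e^{i(t-s)(\Delta_x-\Delta_y)}F(s),G(t)\rangle_{x,y}$; H\"older in $(x,y)$ pairing $L_x^{\rho}L_y^{r_2}$ with $L_x^{\rho'}L_y^{r_2'}$, followed by \eqref{decay}, bounds the integrand by $|t-s|^{-n(1-1/\rho-1/r_2)}\|F(s)\|_{L_x^{\rho'}L_y^{r_2'}}\|G(t)\|_{L_x^{\rho'}L_y^{r_2'}}$. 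Since $|t-s|\sim 2^j$ on $Q$ and $|I|\sim|J|\sim 2^j$, integrating over $I\times J$ and using the Cauchy--Schwarz inequality in $s$ and in $t$ produces the factor $2^{j(1-n(1-1/\rho-1/r_2))}=2^{-j\beta(\rho,\rho)}$, which is \eqref{persquare} for $a=\tilde a=\rho$ since $\beta(\rho,\rho)=-1+n(1-1/\rho-1/r_2)$. Note that, because $n(1-1/r_1-1/r_2)=1$ under the hypothesis of the proposition, $\beta(\rho,\rho)$ vanishes at $\rho=r_1$ and changes sign as $\rho$ passes through $r_1$; this sign change is exactly what will make the eventual sum over $j$ converge.

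The remaining step, which I expect to be the main obstacle, is to pass from these diagonal per‑square estimates to the full two‑parameter family \eqref{persquare} with $a\neq\tilde a$. Here one runs a bilinear (complex) interpolation on the form $T_j$ — and, after inserting unimodular coefficients, on $\sum_{Q\in\mathcal Q_j}T_j$, which is again bilinear — in the spirit of \cite{KT}, exploiting the $F\leftrightarrow G$ symmetry of the diagonal estimates (this is the ``symmetry and flexibility of the bilinear form setting'' noted at the start of this section) in order to decouple the two spatial exponents. The structural fact that makes this work is that $\beta(a,\tilde a)=-1+\tfrac n2\bigl(2-\tfrac1a-\tfrac1{\tilde a}-\tfrac2{r_2}\bigr)$ is affine and symmetric in $(1/a,1/\tilde a)$, with $\beta(a,\tilde a)=\tfrac12\beta(a,a)+\tfrac12\beta(\tilde a,\tilde a)$, so it is precisely the exponent produced by interpolating the diagonal estimates at $\rho=a$ and at $\rho=\tilde a$; keeping $a$ and $\tilde a$ close to $r_1$ keeps both inside the admissible interval for Lemma \ref{lem} and realizes the neighborhood of $(1/r_1,1/r_1)$ depicted in Figure 2. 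The delicate point is to carry out this interpolation so that the output is a genuine mixed norm $L_x^{a'}L_y^{r_2'}$ of $F$ and $L_x^{\tilde a'}L_y^{r_2'}$ of $G$, rather than a product of two norms of each — and this is exactly where working with the bilinear form, instead of directly with the operator $TT^\ast$, pays off.
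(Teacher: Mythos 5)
Your reduction of \eqref{lemeq2} to a single per-square estimate, and your proof of that estimate on the diagonal $a=\tilde a=\rho$ via H\"older in $(x,y)$, the dispersive bound \eqref{decay}, $|t-s|\sim 2^j$ and Cauchy--Schwarz in $s$ and $t$, are both correct and match the first steps of the paper. But the final step is a genuine gap, and it is exactly the step that carries the content of the proposition: you cannot obtain the off-diagonal estimates $a\neq\tilde a$ by interpolating the diagonal ones. Bilinear complex interpolation between the two configurations $L_t^2L_x^{a'}L_y^{r_2'}\times L_t^2L_x^{a'}L_y^{r_2'}$ and $L_t^2L_x^{\tilde a'}L_y^{r_2'}\times L_t^2L_x^{\tilde a'}L_y^{r_2'}$ uses a \emph{single} parameter $\theta$ in both slots, so at $\theta=1/2$ it returns the \emph{diagonal} configuration $L_t^2L_x^{c'}L_y^{r_2'}\times L_t^2L_x^{c'}L_y^{r_2'}$ with $1/c=(1/a+1/\tilde a)/2$; the fact that $\beta(a,\tilde a)=\frac12\beta(a,a)+\frac12\beta(\tilde a,\tilde a)$ only confirms that the \emph{exponent} is consistent, not that the \emph{spaces} decouple. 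The $F\leftrightarrow G$ symmetry gives nothing new either, since the diagonal estimates are already symmetric. Starting from data only at diagonal configurations, no abstract interpolation theorem produces the two-parameter family required here, and a one-dimensional (diagonal) family is useless for the ensuing real-interpolation argument, which explicitly needs $\beta(a_0,a_1)$ with $a_0\neq a_1$.

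The missing idea is to prove the per-square bound \eqref{ex2} at configurations that are \emph{already} asymmetric, by a different mechanism than the dispersive estimate: bring the $s$-integration inside the inner product in \eqref{tj}, apply Cauchy--Schwarz in $L^2_{x,y}$, estimate the $G$-factor by the trivial $L^2$ (energy) bound and H\"older on $J$ (costing $2^{j/2}$), and estimate the $F$-factor by the dual form of the already-established non-endpoint Strichartz estimates, $\|\int_I e^{-is(\Delta_x-\Delta_y)}F\,ds\|_{L^2_{x,y}}\lesssim\|F\|_{L_t^{q_a'}(I;L_x^{a'}L_y^{r_2'})}$, followed by H\"older in $t$. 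This gives \eqref{ex2} for $\tilde a=2$ and $r_2\le a<r_1$, and symmetrically for $a=2$ and $r_2\le\tilde a<r_1$; together with the point $a=\tilde a=\infty$ (which is your dispersive computation at $\rho=\infty$), these three families span, by interpolation, a genuinely two-dimensional neighborhood of $(1/r_1,1/r_1)$. Without this input your argument proves only the diagonal case of the proposition, which is already known and insufficient for the endpoint argument.
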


Assuming for the moment this proposition which will be obtained in the next section,
we now get \eqref{local2} using a bilinear interpolation argument.
Let us first consider the bilinear vector-valued operator $B$ as
$$B(F, G)=\bigg\{  \sum_{Q \in \mathcal{Q}_j} T_{j}(F, G) \bigg\}_{j \in \mathbb{Z}}.$$
Then, \eqref{local2} can be rewritten as
\begin{equation}\label{end2}
B: L_t^{2} L_x^{r_1'} L_y^{r_2'} \times L_t^{2} L_x^{r_1'} L_y^{r_2'} \rightarrow \ell_1^0
\end{equation}
where  $\ell_p^{\alpha}$ for $\alpha \in \mathbb{R}$ and $1 \leq p \leq \infty$ denotes the weighted sequence space with the norm
\begin{align*}
\|\{ x_j \}_{j \ge 0} \|_{\ell_p^{\alpha}} =
\begin{cases}
  (\sum_{j \ge 0} 2^{j\alpha p} |x_j|^p )^{\frac{1}{p}}, & \mbox{if }\, p \neq \infty, \\
  \sup_{j \ge 0} 2^{j\alpha} |x_j|, & \mbox{if }\, p =\infty.
\end{cases}
\end{align*}
For a sufficient small $\varepsilon>0$, we now choose $\frac{1}{a_0}=\frac{1}{r_1} -\varepsilon$ and
$\frac{1}{a_1}=\frac{1}{r_1}+2\varepsilon$.
Note here that we cannot choose $a_0,a_1\geq0$ if $r_1=\infty$ which corresponds to the cases where $(q,r_1,r_2)=(2,\infty,\infty)$ when $n=1$
and $(q,r_1,r_2)=(2,\infty,2)$ when $n=2$.
For this reason these cases are excluded in the theorem.
Since $\beta(a_0, a_0) =2n\varepsilon$ and $\beta( a_0, a_1) =\beta(a_1, a_0) =-n\varepsilon$,
Proposition \ref{lem2} implies
\begin{align*}
	& B :\ L_t^{2} L_x^{a_0'} L_y^{r_2'}\times L_t^{2} L_x^{a_0'} L_y^{r_2'}\rightarrow \ell_{\infty} ^{2n\varepsilon}, \cr
	& B :\ L_t^{2} L_x^{a_0'} L_y^{r_2'}\times L_t^{2} L_x^{a_1'} L_y^{r_2'}\rightarrow \ell_{\infty} ^{-n\varepsilon}, \cr
	& B :\ L_t^{2} L_x^{a_1'} L_y^{r_2'}\times L_t^{2} L_x^{a_1'} L_y^{r_2'}\rightarrow \ell_{\infty} ^{-n\varepsilon}.
\end{align*}

Now we apply the following bilinear interpolation lemma with $s =1$, $p=q=r_1'$ and $\theta_0=\theta_1=1/3$ to obtain
\begin{equation}\label{bibi}
	B :  (L_t^{2} L_x^{a_0'} L_{y}^{r_2'}, L_t^{2} L_x^{a_1'} L_{y}^{r_2'})_{\frac{1}{3},\,r_1'} \times (L_t^{2} L_x^{a_0'} L_{y}^{r_2'}, L_t^{2} L_x^{a_1'} L_{y}^{r_2'})_{\frac{1}{3},r_1'}
\rightarrow(\ell_{\infty} ^{2n\varepsilon}, \ell_{\infty} ^{-n\varepsilon})_{\frac{2}{3},1}.
\end{equation}
Here, $(\cdot\,,\cdot)_{(\theta,p)}$ denotes the real interpolation functor.
\begin{lem}\label{biinter} (\cite{BL}, Section 3.13.5(b))
Let $A_0,A_1,B_0, B_1, C_0, C_1$ are Banach spaces and $B$ be a bilinear operator such that $B: A_0\times B_0\rightarrow C_0$,
$B: A_0\times B_1\rightarrow C_1$ and $B: A_1\times B_0\rightarrow C_1$. Then
$$B: (A_0, A_1)_{\theta_0,ps} \times(B_0, B_1)_{\theta_1,qs}\rightarrow(C_0, C_1)_{\theta,s}$$
if\, $0<\theta_0, \theta_1 < \theta=\theta_0 +\theta_1$, $1 \leq p,q,s \leq \infty$ and $1 \leq 1/p + 1/q$.
\end{lem}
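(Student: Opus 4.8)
The plan is to run the classical Lions--Peetre argument through the discrete $J$-method, so that the whole statement collapses to two weighted convolution estimates for scalar sequences. Recall that for a Banach couple $(A_0,A_1)$ the $J$-functional is $J(t,u)=\max(\|u\|_{A_0},t\|u\|_{A_1})$, and that every $a\in(A_0,A_1)_{\theta_0,ps}$ admits a representation $a=\sum_{m\in\mathbb{Z}}u_m$ (converging in $A_0+A_1$) with $u_m\in A_0\cap A_1$ and $\|\{2^{-\theta_0 m}J(2^m,u_m)\}_m\|_{\ell^{ps}}\lesssim\|a\|_{(A_0,A_1)_{\theta_0,ps}}$, and symmetrically $b=\sum_{n\in\mathbb{Z}}v_n$ with $\|\{2^{-\theta_1 n}J(2^n,v_n)\}_n\|_{\ell^{qs}}\lesssim\|b\|_{(B_0,B_1)_{\theta_1,qs}}$. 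First I would set $\alpha_m=J(2^m,u_m)$, $\beta_n=J(2^n,v_n)$ and record the pointwise bounds $\|u_m\|_{A_0}\le\alpha_m$, $\|u_m\|_{A_1}\le 2^{-m}\alpha_m$, together with the analogous ones for $v_n$.

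Next, using bilinearity I would expand $B(a,b)=\sum_{m,n}B(u_m,v_n)$ and estimate each term from the three hypotheses. The maps $B\colon A_0\times B_0\to C_0$, $B\colon A_0\times B_1\to C_1$ and $B\colon A_1\times B_0\to C_1$ give $\|B(u_m,v_n)\|_{C_0}\lesssim\alpha_m\beta_n$ and, keeping the better of the two $C_1$-bounds (so that $\min(2^{-m},2^{-n})=2^{-\max(m,n)}$), $\|B(u_m,v_n)\|_{C_1}\lesssim 2^{-\max(m,n)}\alpha_m\beta_n$. The key structural step is to group the terms by their natural scale: setting $c_k=\sum_{\max(m,n)=k}B(u_m,v_n)$, the two displayed bounds yield $J(2^k,c_k)=\max(\|c_k\|_{C_0},2^k\|c_k\|_{C_1})\lesssim\sum_{\max(m,n)=k}\alpha_m\beta_n$, so that $B(a,b)=\sum_k c_k$ becomes a candidate $J$-representation for $(C_0,C_1)_{\theta,s}$.

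It then remains to bound $\|\{2^{-\theta k}J(2^k,c_k)\}_k\|_{\ell^s}$. Writing $\tilde\alpha_m=2^{-\theta_0 m}\alpha_m$, $\tilde\beta_n=2^{-\theta_1 n}\beta_n$ and using $\theta=\theta_0+\theta_1$, a direct bookkeeping on the index set $\{\max(m,n)=k\}$ turns the weight $2^{-\theta k}2^{\theta_0 m}2^{\theta_1 n}$ into $2^{-\theta_1(k-n)}$ on the piece $m=k\ge n$ and into $2^{-\theta_0(k-m)}$ on the piece $n=k\ge m$. Hence $2^{-\theta k}J(2^k,c_k)$ is dominated by $\tilde\alpha_k(\phi_1*\tilde\beta)_k+\tilde\beta_k(\phi_0*\tilde\alpha)_k$, where $\phi_0,\phi_1$ are the one-sided geometric kernels $2^{-\theta_0 j}$ and $2^{-\theta_1 j}$ supported on $j\ge 0$ (these lie in every $\ell^r$, $r\ge1$, precisely because $\theta_0,\theta_1>0$). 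I would finish by Hölder followed by Young, handling the first term as $\|\tilde\alpha(\phi_1*\tilde\beta)\|_{\ell^s}\le\|\tilde\alpha\|_{\ell^{ps}}\|\phi_1*\tilde\beta\|_{\ell^{p's}}\lesssim\|\tilde\alpha\|_{\ell^{ps}}\|\tilde\beta\|_{\ell^{qs}}$, and the second symmetrically.

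The hard part, and the only place the hypothesis $1\le 1/p+1/q$ is genuinely used, is this last Young step: the convolution $\ell^{qs}\to\ell^{p's}$ against $\phi_1$ needs a kernel exponent $r$ with $1/r=1+1/(p's)-1/(qs)$, and the admissibility requirement $r\ge 1$ is exactly $1/p'\le 1/q$, i.e. $1/p+1/q\ge 1$ (the symmetric term produces the identical constraint). Assembling these estimates gives $\|\{2^{-\theta k}J(2^k,c_k)\}_k\|_{\ell^s}\lesssim\|a\|_{(A_0,A_1)_{\theta_0,ps}}\|b\|_{(B_0,B_1)_{\theta_1,qs}}$, and the $J$-method description of $(C_0,C_1)_{\theta,s}$ (valid since $0<\theta<1$) then yields $B(a,b)\in(C_0,C_1)_{\theta,s}$ with the asserted norm bound. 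A minor point to verify along the way is convergence of the double series in $C_0+C_1$, which follows from the same term-by-term estimates.
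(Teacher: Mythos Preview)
The paper does not supply its own proof of this lemma: it is quoted verbatim as a known result from Bergh--L\"ofstr\"om \cite{BL}, Section~3.13.5(b), and used as a black box in the endpoint argument. Your write-up reproduces precisely the classical Lions--Peetre proof that appears in that reference: discrete $J$-method representations on each factor, grouping the double sum by the scale $k=\max(m,n)$, and reducing to a H\"older/Young estimate on one-sided geometric kernels, with the condition $1/p+1/q\ge 1$ emerging exactly as the admissibility constraint for Young's inequality. The argument is correct as stated (the implicit requirement $\theta=\theta_0+\theta_1<1$, which you invoke when appealing to the $J$-description of $(C_0,C_1)_{\theta,s}$, is part of the standing hypotheses for the real interpolation functor), so there is nothing to compare beyond noting that you have filled in what the paper chose to cite.
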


Finally by making use of the real interpolation space identities in Lemma \ref{ide} below,
we easily see that
\begin{align*}
(L_t^{2} L_x^{a_0'} L_{y}^{r_2'}, L_t^{2} L_x^{a_1'} L_y^{r_2'})_{1/3,\,r_1'} = L_t^{2} L_x^{r_1'} L_y^{r_2'}
\end{align*}
and
$$(\ell_{\infty} ^{2n\varepsilon}, \ell_{\infty} ^{-n\varepsilon})_{2/3,\,1} = \ell_{1} ^0$$
in \eqref{bibi}.
This implies \eqref{end2} as desired.

\begin{lem}\label{ide}(\cite{BL})
Let $0< \theta < 1$, $1\leq p_0, p_1 \leq\infty$ and $s_0, s_1 \in \mathbb{R}$.
Then
$$(L^{p_0}(A_0), L^{p_1}(A_1))_{\theta, p} = L^{p}((A_0, A_1)_{\theta, p})$$
for two complex Banach spaces $A_0,A_1$ and $1/p=(1-\theta)/p_0+\theta/p_1$, and
	\begin{equation*}
	(\ell_{p_{0}}^{s_{0}},\, \ell_{p_{1}}^{s_{1}})_{\theta, p} = \ell_{p}^{s}
	\end{equation*}
if $s_{0} \neq s_{1}$ and $s=(1-\theta)s_{0} +\theta s_{1}$.
\end{lem}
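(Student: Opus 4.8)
The plan is to prove both identities through the $K$-functional realization of the real method, writing $\|f\|_{(X_0,X_1)_{\theta,p}} \approx \big\| t^{-\theta} K(t, f; X_0, X_1)\big\|_{L^p(\mathbb{R}_+,\, dt/t)}$ for each couple, and reducing the equality of two interpolation spaces to an equivalence of these norms. I would treat the two statements separately, since the mechanism behind each is different: for the vector-valued $L^p$ identity the exponent relation $1/p = (1-\theta)/p_0 + \theta/p_1$ is essential and ties the third index to $p_0,p_1$, whereas for the weighted $\ell_p$ identity the third index $p$ is free and it is the gap $s_0 \neq s_1$ that forces the conclusion.

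For the first identity I would begin with the matched sub-case $p_0 = p_1 = p$. Here one establishes the two-sided pointwise estimate $K(t, f; L^p(A_0), L^p(A_1)) \approx \big\| K(t, f(\cdot); A_0, A_1)\big\|_{L^p_x}$: the lower bound follows from the triangle inequality applied to any admissible decomposition, and the upper bound from selecting a measurable, pointwise near-optimal decomposition $f = g_t + h_t$ and recombining the resulting $L^p$ norms. Inserting this into the real-method norm and invoking Fubini --- legitimate precisely because the outer integration exponent $p$ equals the third interpolation index --- yields $\|f\|_{(L^p(A_0),L^p(A_1))_{\theta,p}} \approx \|f\|_{L^p(\bar A_{\theta,p})}$.

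For the general case $p_0 \neq p_1$ the same scheme applies, but the $K$-functional no longer factors through a single-scale pointwise quantity, and this is the main obstacle. The differing exponents force a scale-dependent rebalancing: the near-optimal decomposition at scale $t$ must be chosen with a H\"older weight depending on $x$, and the estimate for $K(t, f; L^{p_0}(A_0), L^{p_1}(A_1))$ becomes a rescaled pointwise $K$-functional integrated against the correct power. I would handle this exactly as in \cite{BL}, proving one inclusion by the $J$-method (assembling $f$ from a measurable family of $J$-representations of $f(x)$ and estimating the two $L^{p_i}(A_i)$ norms) and the reverse by the $K$-method, the relation $1/p = (1-\theta)/p_0 + \theta/p_1$ being used to match the scale substitution under Fubini. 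The measurable selection and the bookkeeping of the scale change are the only delicate points; the rest is Minkowski's inequality and a change of variables.

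For the weighted $\ell_p$ identity I would first normalize by the weight-covariance of the construction: multiplying the $j$-th coordinate by $2^{-j s_0}$ is an isometry turning $(\ell_{p_0}^{s_0}, \ell_{p_1}^{s_1})$ into $(\ell_{p_0}, \ell_{p_1}^{\sigma})$ with $\sigma = s_1 - s_0 \neq 0$, and $\ell_p^s$ into $\ell_p^{\theta\sigma}$, so it suffices to treat $s_0 = 0$, $s_1 = \sigma \neq 0$. The heart is a routine two-sided estimate of $K(t, x; \ell_{p_0}, \ell_{p_1}^\sigma)$ coming from the crossover index $j^\ast(t)$ at which the two weights $1$ and $t\,2^{j\sigma}$ balance: assigning each coordinate entirely to whichever space is cheaper at scale $t$ gives $K(t,x) \approx \big(\sum_{j \ge j^\ast}|x_j|^{p_0}\big)^{1/p_0} + t\big(\sum_{j < j^\ast}(2^{j\sigma}|x_j|)^{p_1}\big)^{1/p_1}$. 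Substituting this into $\big\| t^{-\theta} K(t,x)\big\|_{L^p(dt/t)}$, exchanging the sum over $j$ with the $t$-integral, and summing the resulting geometric series, the contribution of each coordinate localizes to the single dyadic window $t \approx 2^{-j\sigma}$ --- this is exactly where $s_0 \neq s_1$ enters --- so the dependence on $p_0,p_1$ washes out and one is left with $\big(\sum_j (2^{j\theta\sigma}|x_j|)^p\big)^{1/p} = \|x\|_{\ell_p^{\theta\sigma}}$, i.e.\ $\ell_p^s$ after undoing the normalization. If instead $s_0 = s_1$ there is no crossover, the windows overlap at all scales, and one recovers a Lorentz sequence space rather than $\ell_p^s$, which is why the hypothesis $s_0 \neq s_1$ cannot be dropped.
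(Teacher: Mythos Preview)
The paper does not prove this lemma at all: it is stated with the attribution \cite{BL} and used as a black box, so there is no ``paper's own proof'' to compare against. Your sketch is a faithful outline of the standard arguments in Bergh--L\"ofstr\"om (the $K$-functional/Fubini argument for the vector-valued $L^p$ identity in the matched case, the $J$/$K$-method pairing for $p_0\neq p_1$, and the crossover-index computation of $K(t,x;\ell_{p_0},\ell_{p_1}^{\sigma})$ for the weighted sequence spaces), and the reasoning you describe is correct. One small caveat: the lemma as stated in the paper allows $p_0,p_1=\infty$, whereas the Lions--Peetre theorem in \cite{BL} is proved for $p_0,p_1<\infty$; this does not affect the paper's application (there $p_0=p_1=2$ in the outer layer and finite exponents throughout), and your sketch implicitly assumes finiteness where needed, but you may want to flag the restriction if you write this up.
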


\section{Proof of Proposition \ref{lem2}}

To get \eqref{lemeq2}, we only need to show
\begin{align}\label{ex2}
|T_{j}(F, G)| \lesssim 2^{-j\beta( a,\tilde{a})} \|F\|_{L_t^{2}(I; L_x^{a'} L_y^{r_2'})} \|G\|_{L_t^{2}(J; L_x^{\tilde{a}'} L_y^{r_2'})}
\end{align}
for each square $Q =I\times J \in \mathcal{Q}_j$.
Using the fact that for each $I$ there are at most a fixed finite number of intervals $J$ which satisfy \eqref{whit} and they are all contained
in a neighborhood of $I$ of size $O(2^j)$, we indeed get
\begin{align*}
 \sum_{Q \in \mathcal{Q}_j} \left|T_{j}(F, G) \right| & \lesssim 2^{-j\beta(a,\tilde{a})}\sum_{Q \in \mathcal{Q}_j}  \|F \|_{L_t^{2}(I; L_x^{a'} L_y^{r_2'})} \|G\|_{L_t^{2} (I;L_x^{\tilde{a}'} L_y^{r_2'})} \cr
& \leq  2^{-j\beta(a, \tilde{a})} \bigg(\sum_{Q \in \mathcal{Q}_j } \|F \|_{L_t^{2}(I; L_x^{a'} L_y^{r_2'})}^{2}  \bigg)^{\frac{1}{2}} \cdot \bigg(\sum_{Q \in \mathcal{Q}_j} \|G\|_{L_t^{2} (I;L_x^{\tilde{a}'} L_y^{r_2'})}^{2} \bigg)^{\frac{1}{2}}\cr
&\lesssim  2^{-j\beta(a, \tilde{a})}\|F \|_{L_t^{2}(\mathbb{R}; L_x^{a'} L_y^{r_2'})}\|G \|_{L_t^{2}(\mathbb{R}; L_x^{a'} L_y^{r_2'})}
\end{align*}
as desired.

We shall now show \eqref{ex2} for the following three cases (see Figure 2):
\begin{itemize}
\item[$(a) $] $a=\tilde{a} =\infty$ (point $E$),
\item[$(b) $] $r_2\leq a<r_1$ and $\tilde{a}=2$ (segment $(A,B]$),
\item[$(c) $] $a=2$ and $r_2\leq \tilde{a}<r_1$ (segment $(D,C]$).
\end{itemize}
The proposition will then follow by interpolation with the range of $(a, \tilde{a})$ as in Figure 2.

\begin{figure}\label{fig2}
\centering
\includegraphics[width=0.45\textwidth]{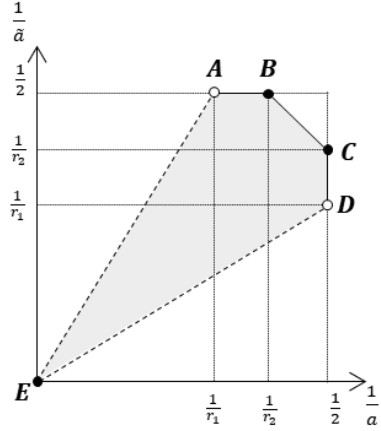}
\caption{The range of $(a, \tilde{a})$ for which \eqref{lemeq2} holds when $n\ge3$.}
\end{figure}

To show $(a)$,
we first use H\"older's inequality in $x, y$ and the decay estimate \eqref{decay}
with the fact that $|t-s|\sim 2^j$;
\begin{align*}
\left|T_{j}(F, G) \right|& =
\left| \int_J \int_I \left\langle e^{i(t-s)(\Delta_x -\Delta_y)} F(s),\  G(t) \right\rangle_{x,y} ds dt \right| \cr
&\leq  \int_J \int_I  \| e^{i(t-s)(\Delta_x -\Delta_y)} F(s) \|_{L_x^{\infty}L_y^{r_2}}  \| G(t)\|_{L_x^{1} L_y^{r_2'}}  ds dt \cr
&\lesssim 2^{-nj(1-\frac{1}{r_2})} \|F\|_{L_t^1(I;L_x^{1} L_y^{r_2'})} \| G\|_{L_t^1(J;L_x^{1} L_y^{r_2'})},
\end{align*}
and then H\"older's inequality in each $t, s$ gives
$$ \left|T_{j}(F, G) \right| \leq 2^{-nj(1-\frac{1}{r_2})+j}  \|F\|_{L_t^2(I;L_x^{1} L_y^{r_2'})} \| G\|_{L_t^2(J;L_x^{1} L_y^{r_2'})}$$
as desired.

For the case $(b)$, we bring the $s$-integration inside the inner product in \eqref{tj} and apply H\"older's inequality in $x, y$ to obtain
\begin{align}\label{eq2}
\left|T_{j}(F, G) \right|
& \leq \int_J  \left\| \int_I e^{-is(\Delta_x -\Delta_y)} F(s) ds \right\|_{L_{x,y}^2}  \big\|e^{-it(\Delta_x -\Delta_y)} G(t)\big\|_{L_{x,y}^2} dt  \cr
& \leq \left\| \int_I e^{-is(\Delta_x -\Delta_y)} F(s) ds \right\|_{L_{x,y}^2}  \int_J  \big\|e^{-it(\Delta_x -\Delta_y)} G(t) \big\|_{L_{x,y}^{2}} dt.
\end{align}
By applying the Plancherel theorem and H\"older's inequality, the second term in the right-hand side of \eqref{eq2} becomes
\begin{equation}\label{gjk}
 \int_{J} \big\|e^{-it(\Delta_x -\Delta_y)} G(t)\big\|_{L_{x,y}^{2} } dt \leq 2^{\frac{j}{2}} \| G\|_{L_t^2(J;L_{x,y}^2)}.
 \end{equation}
On the other hand, we handle the first term using the dual version of the nonendpoint estimates as follows:
\begin{align*}
\left\| \int_I e^{-is(\Delta_x -\Delta_y)} F(s)\ ds \right\|_{L_{x,y}^2}
&=\left\| \int_{\mathbb{R}} e^{-is(\Delta_x -\Delta_y)} \chi_I(s)F(s)\ ds \right\|_{L_{x,y}^2}\cr
&\lesssim \left\|\chi_{I} F  \right\|_{L_t^{q_a'} L_x^{a'} L_y^{r_2'}}\cr
&=\left\| F  \right\|_{L_t^{q_a'}(I; L_x^{a'} L_y^{r_2'})},
\end{align*}
where $2 < q_a \leq \infty$, $2\leq r_2 \leq a$ and $2/q_a=n(1-1/a-1/r_2)$ which imply
\begin{equation*}
\frac{1}{r_1} =1-\frac1{r_2}-\frac1n< \frac{1}{a}\leq \frac{1}{r_2}.
\end{equation*}
Then H\"older's inequality in $t$ gives
\begin{align}\label{dfg}
\nonumber\left\| \int_I e^{-is(\Delta_x -\Delta_y)} F(s)\ ds \right\|_{L_{x,y}^2}
\nonumber&\lesssim 2^{j(\frac{1}{2}-\frac{1}{q_a})} \left\| F  \right\|_{L_t^{2}(I; L_x^{a'} L_y^{r_2'})} \\
&= 2^{j(\frac{1}{2}-\frac{n}{2}(1-\frac{1}{a}-\frac{1}{b}))} \left\| F  \right\|_{L_t^{2}(I; L_x^{a'} L_y^{r_2'})}.
\end{align}
Combining \eqref{eq2}, \eqref{gjk} and \eqref{dfg}, we finally get
$$\left|T_{j}(F, G) \right| \lesssim 2^{j(1 -\frac{n}{2}(1-\frac{1}{a}-\frac{1}{r_2}))}
\| F  \|_{L_t^{2}(I; L_x^{a'} L_y^{r_2'})} \| G\|_{L_t^{2}(J;L_{x,y}^2)} $$
as desired.
A similar argument gives the case $(c)$.

\section{Applications}
In this final section we present some applications of Theorem \ref{thm} to the following nonlinear problem:
\begin{equation}\label{lv}
\begin{cases}
i\partial_tu+(\Delta_x-\Delta_y)u=\pm|u|^{\alpha}u, \\
u(x, y, 0)=f(x, y),
\end{cases}
\end{equation}
where $(x,y,t)\in\mathbb{R}^n \times \mathbb{R}^n \times \mathbb{R}$ and $\alpha>0$.
Particularly when $n=1$, this equation with $\alpha=2$ is reduced to the two-dimensional hyperbolic nonlinear Schr\"odinger equation
which appears in nonlinear optics (\cite{DLS,SS}) and aries naturally in the study of modulation of wave trains in gravity
water waves (\cite{T,TW}).
Various issues concerning the well-posedness of this special case $n=1$ have been intensively studied until lately
(see, for example, \cite{CG,DMPS} and references therein).
Motivated by this, we address here the higher dimensional cases.
Our result is the following theorem which shows that the problem \eqref{lv} is locally well-posed:

\begin{thm}\label{thm2}
	Let $n \ge3$.
	Then for $f \in L_x^2 H_y^1$ there exist $T>0$ and a unique solution to \eqref{lv} for $0<\alpha<2/(n-1)$,
	$$ u \in C_t([0, T]; L^2_x H^1_y) \cap L_t^q ([0,T];L_x^{r_1} W_y^{1, r_2}),$$
with $(q, r_1, r_2)$ satisfying all the conditions given in Theorem \ref{thm} together with
\begin{equation}\label{res}
\frac{\alpha(n-1)-1}{2}\leq \frac{1}{q}\leq \frac{\alpha(n-1)}{2},\quad
\frac{1}{2}-\frac{\alpha}{2}\leq \frac{1}{r_2}-\frac{\alpha}{n}\leq \frac{1}{r_1} \leq 1-\frac{\alpha}{2}.
\end{equation}
\end{thm}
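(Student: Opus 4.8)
The plan is to solve \eqref{lv} by a contraction mapping argument applied to the Duhamel operator, using Theorem \ref{thm} and its $TT^\ast$/inhomogeneous consequence as the linear input, and exploiting the fact that each $\partial_{y_j}$ commutes with the propagator $e^{it(\Delta_x-\Delta_y)}$ to propagate the $H^1_y$ regularity. The hypotheses $n\ge3$ and $\alpha<2/(n-1)$ are what guarantee that the relevant Sobolev embedding in the $y$-variable and a nonempty range of usable exponents are available, and \eqref{res} will record precisely the constraints under which the nonlinear estimate closes.

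First I would set up the fixed point. Define $\Phi(u)(t)=e^{it(\Delta_x-\Delta_y)}f\mp i\int_0^t e^{i(t-s)(\Delta_x-\Delta_y)}(|u|^\alpha u)(s)\,ds$, fix a triple $(q,r_1,r_2)$ as in the statement, and look for a fixed point of $\Phi$ in the closed ball $X_T=\{u:\|u\|_{L_t^\infty([0,T];L^2_xH^1_y)}+\|u\|_{L_t^q([0,T];L^{r_1}_xW^{1,r_2}_y)}\le M\}$, which is complete under the weaker metric $d(u,v)=\|u-v\|_{L^\infty_tL^2_xL^2_y}+\|u-v\|_{L^q_tL^{r_1}_xL^{r_2}_y}$: in the difference estimates only the factor $u-v$ is placed in an $L^q_tL^{r_1}_xL^{r_2}_y$-type norm, while the $(|u|^\alpha+|v|^\alpha)$ factors keep the full $X_T$ norm, so this metric suffices. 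Since $\partial_{y_j}$ is a Fourier multiplier in $y$ it commutes with $e^{it(\Delta_x-\Delta_y)}$, so Theorem \ref{thm} upgrades to $\|e^{it(\Delta_x-\Delta_y)}f\|_{L_t^\infty L^2_xH^1_y}=\|f\|_{L^2_xH^1_y}$ and $\|e^{it(\Delta_x-\Delta_y)}f\|_{L_t^qL^{r_1}_xW^{1,r_2}_y}\lesssim\|f\|_{L^2_xH^1_y}$, and, by the standard $TT^\ast$ argument together with a Christ--Kiselev argument passing from $\int_{\mathbb{R}}$ to $\int_0^t$ (placing the nonlinearity in a non-endpoint admissible triple $(\tilde q,\tilde r_1,\tilde r_2)$), $\big\|\int_0^te^{i(t-s)(\Delta_x-\Delta_y)}F(s)\,ds\big\|_{L_t^\infty L^2_xH^1_y\cap L^q_tL^{r_1}_xW^{1,r_2}_y}\lesssim\|F\|_{L_t^{\tilde q'}L^{\tilde r_1'}_xW^{1,\tilde r_2'}_y}$.

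The core of the argument is the nonlinear estimate $\||u|^\alpha u\|_{L_t^{\tilde q'}L^{\tilde r_1'}_xW^{1,\tilde r_2'}_y}\lesssim T^{\delta}\|u\|_{L^q_tL^{r_1}_xW^{1,r_2}_y}^{\alpha+1}$ for some $\delta>0$, together with the companion difference estimate in the weaker norm obtained the same way from the pointwise bound $|\,|u|^\alpha u-|v|^\alpha v\,|\lesssim(|u|^\alpha+|v|^\alpha)|u-v|$. Using the chain-rule bound $|\nabla_y(|u|^\alpha u)|\lesssim|u|^\alpha|\nabla_y u|$ (valid for $\alpha>0$), one estimates $|u|^\alpha u$ and $|u|^\alpha\nabla_y u$ by Hölder in $t$, $x$ and $y$: the $\alpha$ extra copies of $u$ go into an auxiliary mixed-norm space $L^a_tL^{b_1}_xL^{b_2}_y$ while the last factor ($u$ or $\nabla_y u$) stays in $L^q_tL^{r_1}_xL^{r_2}_y$, the Hölder exponents being dictated by $1/\tilde q'=\alpha/a+1/q$, $1/\tilde r_1'=\alpha/b_1+1/r_1$, $1/\tilde r_2'=\alpha/b_2+1/r_2$. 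The auxiliary norm of $u$ is then controlled by interpolating the energy norm $L^\infty_tL^2_xH^1_y$ with the Strichartz norm $L^q_tL^{r_1}_xW^{1,r_2}_y$; since no smoothing is available in $x$, this forces $b_1$ to lie between $2$ and $r_1$, which gives $1/r_1\le1-\alpha/2$, whereas in $y$ one may additionally spend the derivative through the Sobolev embedding $W^{1,r_2}_y(\mathbb{R}^n)\hookrightarrow L^{b_2}_y(\mathbb{R}^n)$, which gives the two-sided bound on $1/r_2$ in \eqref{res}, and the Hölder in time produces the factor $T^\delta$ whose exponent is positive exactly on the displayed range of $1/q$; the subcriticality $\alpha<2/(n-1)$ is precisely what makes all of these constraints simultaneously satisfiable and consistent with the admissibility identity \eqref{admi}.

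Finally, combining the linear bounds with the nonlinear estimate, for $M$ comparable to $\|f\|_{L^2_xH^1_y}$ and $T$ small enough one gets $\Phi:X_T\to X_T$ and $d(\Phi(u),\Phi(v))\le\tfrac12 d(u,v)$, so $\Phi$ has a unique fixed point $u$, which is the asserted solution; the property $u\in C_t([0,T];L^2_xH^1_y)$ follows from the Duhamel formula, continuity of the linear flow on $L^2_xH^1_y$, and continuity of the Duhamel integral into $L^2_xH^1_y$ via the inhomogeneous estimate on subintervals together with dominated convergence, and uniqueness in the stated class is the uniqueness of the fixed point. I expect the main obstacle to be the bookkeeping in the nonlinear estimate: one must choose the auxiliary exponents $a,b_1,b_2$ and the second admissible triple $(\tilde q,\tilde r_1,\tilde r_2)$ so that, simultaneously, all Hölder exponents are admissible, the $x$-exponent is reached by pure interpolation, the $y$-exponent is reached by interpolation plus one Sobolev derivative, and a strictly positive power of $T$ survives — verifying that this intersection of requirements is nonempty and coincides exactly with \eqref{res} together with $0<\alpha<2/(n-1)$ is the one genuinely delicate point.
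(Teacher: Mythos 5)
Your proposal is correct and follows essentially the same route as the paper: Duhamel plus contraction in a ball of $C_tL^2_xH^1_y\cap L^q_tL^{r_1}_xW^{1,r_2}_y$ under the weaker metric, linear input from Theorem \ref{thm} upgraded by commuting $\partial_{y_j}$ and by the $TT^\ast$/Christ--Kiselev inhomogeneous estimate, and a H\"older-in-$t,x,y$ nonlinear estimate producing a positive power of $T$ exactly when $\alpha<2/(n-1)$. The only cosmetic difference is that the paper fixes your auxiliary space once and for all as $L_t^\infty L_x^2 L_y^{2n/(n-2)}$, controlled directly by the energy norm via $\dot H^1_y\hookrightarrow L^{2n/(n-2)}_y$ (whence $n\ge3$), rather than leaving the exponents $a,b_1,b_2$ to a general interpolation, and it reads off \eqref{res} from the H\"older relations \eqref{hol} combined with admissibility of $(\tilde q,\tilde r_1,\tilde r_2)$.
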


\begin{rem}
From the proof, one can see that the same result can be valid for $L^2$ initial data $f \in L_x^2 L_y^2$
under the diagonal case $r_1=r_2$ with $0<\alpha<2/n$.
\end{rem}

\begin{proof}[Proof of Theorem \ref{thm2}]
By Duhamel's principle, the solution of \eqref{lv} can be written as
\begin{equation}\label{duh}
\Phi(u)=e^{it (\Delta_x -\Delta_y)} f -i \int_0^t e^{i(t-s)(\Delta_x - \Delta_y)} F(u) ds
\end{equation}
where $F(u)= \pm|u( \cdot, \cdot, s)|^{\alpha} u( \cdot, \cdot, s).$
For suitable values of $T, A>0$, we shall show that $\Phi$ defines a contraction map on
\begin{align*}
X(T, A)=  \bigg\lbrace u \in C_t(I; L^2_x H^1_y)& \cap L_t^q (I ;L_x^{r_1} W_y^{1, r_2}): \\
&\sup_{t\in I} \|u\|_{L_x^2 H_y^1} +\|u\|_{L_t^q(I;L_x^{r_1} W_y^{1, r_2})} \leq A \bigg\rbrace
\end{align*}
equipped with the distance
$$d(u, v)= \sup_{t\in I} \|u-v\|_{L_{x,y}^2} +\|u-v\|_{L_t^q(I; L_x^{r_1} L_y^{r_2})},$$
where $I=[0, T]$ and $(q, r_1, r_2)$ is given as in Theorem \ref{thm2}.

To control the Duhamel term in \eqref{duh}, we need the following inhomogeneous estimates
which are derived from the homogeneous estimates \eqref{T} adopting $TT^{\ast}$ argument and the Christ-Kiselev lemma \cite{CK}:

\begin{cor}\label{cor}
Let $n\ge 1$. Assume that $(q,r_1,r_2)$ and $(\tilde{q},\tilde{r}_1,\tilde{r}_2)$ are given as in Theorem \ref{thm}.
Then
\begin{equation}\label{TT}
\left\| \int_0^t e^{it(\Delta_x -\Delta_y)} F(s)\, ds \right\|_{L_t^q  L_x^{r_1} L_y^{r_2}} \lesssim \|F\|_{L_t^{\tilde{q}'} L_x^{\tilde{r}_1'} L_y^{\tilde{r}_2'}}
\end{equation}
if $q>\tilde{q}'$.
\end{cor}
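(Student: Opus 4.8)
The plan is to deduce Corollary \ref{cor} from Theorem \ref{thm} by the standard combination of the $TT^\ast$ argument with the Christ--Kiselev lemma. First I would record the two building blocks. On the one hand, Theorem \ref{thm} gives the homogeneous estimate $\|e^{it(\Delta_x-\Delta_y)}f\|_{L_t^qL_x^{r_1}L_y^{r_2}}\lesssim\|f\|_{L^2}$ for every admissible triple, and by duality its adjoint form reads $\|\int_{\mathbb{R}}e^{-is(\Delta_x-\Delta_y)}F(s)\,ds\|_{L^2}\lesssim\|F\|_{L_t^{\tilde q'}L_x^{\tilde r_1'}L_y^{\tilde r_2'}}$ for the triple $(\tilde q,\tilde r_1,\tilde r_2)$. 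Composing these two, and using the group property $e^{it(\Delta_x-\Delta_y)}e^{-is(\Delta_x-\Delta_y)}=e^{i(t-s)(\Delta_x-\Delta_y)}$, I obtain the \emph{untruncated} inhomogeneous estimate
\begin{equation*}
\left\|\int_{\mathbb{R}}e^{i(t-s)(\Delta_x-\Delta_y)}F(s)\,ds\right\|_{L_t^qL_x^{r_1}L_y^{r_2}}\lesssim\|F\|_{L_t^{\tilde q'}L_x^{\tilde r_1'}L_y^{\tilde r_2'}}.
\end{equation*}
This is the $TT^\ast$-type bound, valid for all admissible pairs of triples with no restriction relating $q$ and $\tilde q$; note that when $(q,r_1,r_2)=(\tilde q,\tilde r_1,\tilde r_2)$ this is exactly \eqref{TT*}, already established in the excerpt, and the mixed version follows by the same Hölder-plus-Hardy--Littlewood--Sobolev scheme since the kernel decay in Lemma \ref{lem} is symmetric in the two triples.

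Next I would pass from the full-line integral to the retarded (truncated) integral $\int_0^t$. This is precisely the situation handled by the Christ--Kiselev lemma \cite{CK}: if an operator $F\mapsto\int_{\mathbb{R}}k(t,s)F(s)\,ds$ is bounded from $L_t^{p_1}(\text{Banach-valued})$ to $L_t^{p_2}(\text{Banach-valued})$ with $p_1<p_2$, then the truncated operator $F\mapsto\int_{s<t}k(t,s)F(s)\,ds$ is also bounded between the same spaces. Here the "inner" Banach spaces are $L_x^{\tilde r_1'}L_y^{\tilde r_2'}$ on the domain side and $L_x^{r_1}L_y^{r_2}$ on the target side, the time exponents are $p_1=\tilde q'$ and $p_2=q$, and the hypothesis $p_1<p_2$ is exactly the assumption $q>\tilde q'$ in the corollary. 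Applying the lemma to the untruncated bound from the previous step yields
\begin{equation*}
\left\|\int_0^t e^{i(t-s)(\Delta_x-\Delta_y)}F(s)\,ds\right\|_{L_t^qL_x^{r_1}L_y^{r_2}}\lesssim\|F\|_{L_t^{\tilde q'}L_x^{\tilde r_1'}L_y^{\tilde r_2'}},
\end{equation*}
which is \eqref{TT} after absorbing the harmless factor $e^{it(\Delta_x-\Delta_y)}$ (a unitary group in $x,y$, pulled outside the $t$-integral) into the operator as written in the statement.

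I expect the only genuine subtlety to be bookkeeping rather than a real obstacle: one must make sure the mixed-norm Christ--Kiselev lemma applies, i.e.\ that the target norm $L_x^{r_1}L_y^{r_2}$ and the source norm $L_x^{\tilde r_1'}L_y^{\tilde r_2'}$ are legitimate Banach spaces of functions in the space variables (they are, being iterated Lebesgue spaces), and that the strict inequality $q>\tilde q'$ — not merely $q\ge\tilde q'$ — is available; this is why the hypothesis is stated with a strict sign and is consistent with the borderline $q=\tilde q'$ case being covered separately by \eqref{TT*}. A minor point is that when $q=\infty$ (so necessarily $r_1=r_2=2$), the condition $q>\tilde q'$ holds for any admissible $\tilde q$, and the estimate degenerates to an energy-type bound that one can also see directly; including it costs nothing. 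No new analytic input beyond Theorem \ref{thm} and \cite{CK} is needed, so the proof is short.
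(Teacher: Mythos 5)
Your proposal is correct and follows exactly the route the paper indicates: the authors state that Corollary \ref{cor} is ``derived from the homogeneous estimates \eqref{T} adopting $TT^{\ast}$ argument and the Christ--Kiselev lemma,'' which is precisely your composition of $T$ with $\tilde{T}^{\ast}$ followed by the Christ--Kiselev truncation under $q>\tilde q'$. The only quibble is your closing remark about ``absorbing'' a factor $e^{it(\Delta_x-\Delta_y)}$ by unitarity (it is unitary only on $L^2$, not on $L_x^{r_1}L_y^{r_2}$); the kernel in \eqref{TT} is simply a typo for $e^{i(t-s)(\Delta_x-\Delta_y)}$, as the use of the corollary in the proof of Theorem \ref{thm2} confirms, so no such step is needed.
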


Now we first show that $\Phi(u) \in X$ for $u\in X$.
Using Plancherel's theorem and then the adjoint form of \eqref{T}, we see that
\begin{align*}
\sup_{t\in I}\|\Phi(u)\|_{L_x^2 H_y^1}
&\leq C\|f\|_{L_x^2 H_y^1}+C\sup_{t\in I}\bigg\| \int_0^t e^{-is(\Delta_x - \Delta_y)} F(u) ds\bigg\|_{L_x^2 H_y^1}\\
&\leq C\|f\|_{L_x^2 H_y^1}+C\| F(u)\|_{L_t^{\tilde{q}'}(I;L_x^{\tilde{r}_1'}W_y^{1, \tilde{r}_2'})}
\end{align*}
for $(\tilde{q}, \tilde{r}_1, \tilde{r}_2)$ given as in Corollary \ref{cor}.
For $q>\tilde{q}'$, we assume for the moment that
\begin{equation}\label{no}
\|F(u)\|_{L_t^{\tilde{q}'}(I;L_x^{\tilde{r}_1'} W_y^{1, \tilde{r}_2'})}
\leq CT^{\frac{1}{\tilde{q}'}-\frac{1}{q}} \|u\|_{L_t^{\infty}(I;L_x^2 H_y^1) }^{\alpha} \|u\|_{L_t^{q}(I;L_x^{r_1}W_y^{1, r_2})},
\end{equation}
where
\begin{equation}\label{hol}
1/\tilde{r}_1'=\alpha/2 + 1/r_1\quad\text{and}\quad 1/\tilde{r}_2'=\alpha(1/2-1/n)+1/r_2
\end{equation}
which imply
\begin{equation}\label{alp}
1/\tilde{q}'-1/q=(2+\alpha-n\alpha)/2>0\quad (\text{thus}, \alpha<2/(n-1))
\end{equation}
by combining the conditions \eqref{admi} for $(q,r_1,r_2)$ and $(\tilde{q},\tilde{r}_1,\tilde{r}_2)$.
Hence we get for $u\in X$
\begin{equation*}
\sup_{t\in I}\|\Phi(u)\|_{L_x^2 H_y^1}\leq C\|f\|_{L_x^2 H_y^1}+ CT^{\frac{2+\alpha-n\alpha}{2}} A^{\alpha+1}.
\end{equation*}
The same argument together with Theorem \ref{thm} also implies
$$\|\Phi(u)\|_{L_t^{q}(I; L_x^{r_1} W_y^{1, r_2})}\leq C\|f\|_{L_x^2 H_y^1} +CT^{\frac{2+\alpha-n\alpha}{2}} A^{\alpha+1}$$
for $u\in X$.
Consequently, $\Phi(u)\in X$ if
\begin{equation}\label{ne}
C\|f\|_{L_x^2 H_y^1}+CT^{\frac{2+\alpha-n\alpha}{2}}A^{\alpha+1} \leq A.
\end{equation}

Next we show that for $u, v \in X$
\begin{equation}\label{cont}
d\left( \Phi(u), \Phi(v) \right) \leq \frac{1}{2} d(u, v).
\end{equation}
First we use \eqref{TT} to get
\begin{align*}
d\left( \Phi(u), \Phi(v) \right)
&= d\left(\int_0^t e^{i(t-s)(\Delta_x -\Delta_y)} F(u)\, ds ,\int_0^t e^{i(t-s)(\Delta_x -\Delta_y)} F(v)\, ds  \right)\\
& \leq  C\| F(u)-F(v)\|_{L_t^{\tilde{q}'}(I;L_x^{ \tilde{r}_1'} L_y^{\tilde{r}_2'})}.
\end{align*}
Using the simple inequality
$||u|^{\alpha}u-|v|^{\alpha}v|\leq C(|u|^{\alpha}+|v|^{\alpha})|u-v|$,
and then applying H\"older's inequality in $x,y,t$ under the condition \eqref{hol}, we obtain
\begin{align*}
\|F(u)-&F(v)\|_{L_t^{\tilde{q}'}(I;L_x^{ \tilde{r}_1'} L_y^{\tilde{r}_2'})} \\
&\leq CT^{\frac{1}{\tilde{q}'}-\frac{1}{q}}
(\|u\|_{L_t^{\infty}(I; L_x^2 L_y^{\frac{2n}{n-2}})}^{\alpha}+\|v\|_{L_t^{\infty}(I; L_x^2 L_y^{\frac{2n}{n-2}})}^{\alpha})\|u-v\|_{L_t^q(I; L_x^{r_1} L_y^{r_2})}.
\end{align*}
Then, \eqref{alp} and the Sobolev embedding $\dot{H}^{1} \hookrightarrow L^{\frac{2n}{n-2}}$ when $n>2$ give
$$ d\left( \Phi(u), \Phi(v) \right) \leq CT^{\frac{2+\alpha-n\alpha}{2}}A^{\alpha}\, d(u, v)$$ for $u\in X$.

Finally, we choose $A=2C\|f\|_{L_x^2 H_y^1}$ and $T$ so that $CT^{\frac{2+\alpha-n\alpha}{2}}A^{\alpha} \leq 1/2$,
and thus \eqref{ne} and \eqref{cont} hold as desired.
Therefore, there exists a unique local solution $u\in C_t(I; L^2_x H^1_y) \cap L_t^q (I ;L_x^{r_1} W_y^{1, r_2})$.

It remains to show \eqref{no}. We first see that
\begin{equation*}
\|F(u)\|_{L_t^{\tilde{q}'}(I;L_x^{\tilde{r}_1'} W_y^{1, \tilde{r}_2'})}
\leq C\| |u|^{\alpha}u\|_{L_t^{\tilde{q}'}(I;L_x^{\tilde{r}_1'} L_y^{ \tilde{r}_2'})}
 +C\| |u|^{\alpha} |\nabla_y u|\|_{L_t^{\tilde{q}'}(I;L_x^{\tilde{r}_1'} L_y^{ \tilde{r}_2'})}.
\end{equation*}
By H\"older's inequality, we then get
$$\| |u|^{\alpha}u\|_{L_t^{\tilde{q}'}(I;L_x^{\tilde{r}_1'} L_y^{ \tilde{r}_2'})} \leq C\|u \|_{L_t^{\infty}(I;L_x^2 L_y^{\frac{2n}{n-2}})}^{\alpha} \|u \|_{L_t^{\tilde{q}'}(I;L_x^{r_1}L_y^{r_2})}
$$
and
$$\| |u|^{\alpha} |\nabla_y u|\|_{L_t^{\tilde{q}'}(I;L_x^{\tilde{r}_1'} L_y^{ \tilde{r}_2'})}\leq C\|u \|_{L_t^{\infty}(I;L_x^2 L_y^{\frac{2n}{n-2}})}^{\alpha} \| u \|_{L_t^{\tilde{q}'}(I;L_x^{r_1}\dot{W}_y^{1,r_2})}
$$
under the condition \eqref{hol}.
Thus, we obtain
\begin{align*}
\|F(u)\|_{L_t^{\tilde{q}'}(I;L_x^{\tilde{r}_1'} W_y^{1, \tilde{r}_2'})} &\leq C\|u\|_{L_t^{\infty}(I; L_x^2 H_y^1) }^{\alpha} \|u\|_{L_t^{\tilde{q}'}(I;L_x^{ r_1} W_y^{1, r_2})}\\
&\leq CT^{\frac{1}{\tilde{q}'}-\frac{1}{q}}\|u\|_{L_t^{\infty}(I;L_x^2 H_y^1) }^{\alpha} \|u\|_{L_t^{q}(I;L_x^{r_1}W_y^{1, r_2})}
\end{align*}
using $\dot{H}^{1} \hookrightarrow L^{\frac{2n}{n-2}}$ and H\"older's inequality in time with $q>\tilde{q}'$, as desired.

We finish the proof with a remark that the condition \eqref{res} follows from the requirements on $(\tilde{q}, \tilde{r}_1, \tilde{r}_2)$.
Indeed, combining \eqref{hol} with $2\leq \tilde{r}_2 \leq \tilde{r}_1\leq \infty$ implies the second condition in \eqref{res},
while the first one in \eqref{res} follows from a combination with $2\leq \tilde{q}\leq \infty$ and \eqref{admi} for $(q,r_1,r_2)$ and $(\tilde{q},\tilde{r}_1,\tilde{r}_2)$.
\end{proof}

\end{document}